\newcommand{\R}{\mathbb{R}}         
\newcommand{\N}{\mathbb{N}}         
\renewcommand{\S}{\mathbb{S}}          
\newcommand{\Ac}{\mathcal{A}}
\newcommand{\Gc}{\mathcal{G}}
\newcommand{\Xc}{\mathcal{X}}
\newcommand{\Yc}{\mathcal{Y}}
\newcommand{\Zc}{\mathcal{Z}}
\newcommand{\Ab}{\mathbf{A}}
\newcommand{\Gb}{\mathbf{G}}
\newcommand{\Xb}{\mathbf{X}}
\newcommand{\Zb}{\mathbf{Z}}
\newcommand{\cl}{\prec}
\newcommand{\cg}{\succ}
\newcommand{\diag}{\mathrm{diag}}
\newcommand{\ga}{\gamma}
\newcommand{\al}{\alpha}
\newcommand{\eps}{\varepsilon}
\newcommand{\mat}[2]{\left(\begin{array}{@{}#1@{}}#2\end{array}\right)} 
\newcommand{\smat}[1]{\left(\begin{smallmatrix}#1\end{smallmatrix}\right)}
\newcommand{\teq}[1]{\quad\text{#1}\quad} 
\newenvironment{red_test}{\color{red}}{} 
\newcommand{\h}{\hat}
\newcommand{\mstrut}[1]{\rule{0pt}{#1}}
\let\oldhline=\hline 
\renewcommand{\hline}{\oldhline\mstrut{2.5ex}}
\let\oldhdashline=\hdashline 
\renewcommand{\hdashline}{\oldhdashline\mstrut{2.5ex}}
\renewcommand{\th}{\theta}
\newcommand{\Ag}{{A^c}}
\newcommand{\Bg}{{B^c}}
\newcommand{\Cg}{{C^c}}
\newcommand{\Dg}{{D^c}}
\newcommand{\AJg}{{A_{\!J}^c}}
\newcommand{\BJg}{{B_{\!J}^c}}
\newcommand{\CJg}{{C_{\!J}^c}}
\newcommand{\DJg}{{D_{\!J}^c}}
\newcommand{\Xg}{{\Xc}}
\newcommand{\Xr}{{X}}
\newcommand{\Yr}{{Y}}
\newcommand{\Kr}{{K}}
\newcommand{\Lr}{{L}}
\newcommand{\Mr}{{M}}
\newcommand{\Nr}{{N}}
\newcommand{\KJr}{{K_J}}
\newcommand{\LJr}{{L_J}}
\newcommand{\MJr}{{M_J}}
\newcommand{\NJr}{{N_J}}
\newcommand{\Gr}{{G}}
\newcommand{\Hr}{{H}}
\newcommand{\GJr}{{G_J}}
\newcommand{\GJg}{{\cal G_J}}
\newcommand{\Gg}{{\cal G}}
\newtheorem{theo}{Theorem}
\newtheorem{lemm}[theo]{Lemma}
\newtheorem{coro}[theo]{Corollary}
\newtheorem{rema}[theo]{Remark}
\newenvironment{proof}[1][Proof]{
	\bf #1. \rm}
{\hfill \footnotesize{$\blacksquare$}\vspace{2ex}}
\begin{document}
\begin{frontmatter}

\title{Output-Feedback Synthesis for a Class of Aperiodic Impulsive Systems \thanksref{footnoteinfo}}

\thanks[footnoteinfo]{This project has been funded by Deutsche Forschungsgemeinschaft (DFG, German Research Foundation) under Germany's Excellence Strategy -EXC 2075 -390740016, which is gratefully acknowledged by the authors.}

\author[First]{Tobias Holicki and Carsten W. Scherer}

\address[First]{Department of Mathematics, University of Stuttgart, Germany, 
	\\ e-mail: \{tobias.holicki,~carsten.scherer\}@imng.uni-stuttgart.de}

\begin{abstract}                
We derive novel criteria for designing stabilizing dynamic output-feedback controllers for a class of aperiodic impulsive systems subject to a range dwell-time condition. Our synthesis conditions are formulated as clock-dependent linear matrix inequalities (LMIs) which can be solved numerically, e.g., by using matrix sum-of-squares relaxation methods. We show that our results allow us to design dynamic output-feedback controllers for aperiodic sample-data systems and illustrate the proposed approach by means of a numerical example.
\end{abstract}

\begin{keyword}
Impulsive Systems, sample-data systems, dynamic output-feedback, stability, clock-dependent conditions, linear matrix inequalities
\end{keyword}

\end{frontmatter}



\begin{textblock}{12}(1.75, 15.75)
	\fbox{
		\begin{minipage}{\textwidth}
		\small \textcopyright~ 2022 IFAC. This work has been published in IFAC-PapersOnline under a Creative Commons Licence CC-BY-NC-ND. \\ DOI: 10.1016/j.ifacol.2020.12.981 
		\end{minipage}
}
\end{textblock}


\section{Introduction}

Impulsive systems form a rich class of hybrid system which have applications, e.g., in system biology, robotics as well as communication systems, and which have been studied, e.g., by \cite{GoeSan09}, \cite{HesLib08}, \cite{YeMic98}, \cite{BaiSim89}, \cite{HadChe06} and \cite{Yan01}. They evolve continuously but also undergo instantaneous changes. This leads to a combination of both continuous- and discrete-time dynamics and makes their analysis challenging.
We emphasized that, most interestingly, the class of impulsive systems even encompasses switched and sample-data systems, as shown for example in \cite{Bri17b}, \cite{SivKha94} and \cite{NagHes08}.

In the present paper we consider impulsive systems where the sequence of impulse instants $(t_k)_{k\in \N_0}$ satisfies a range dwell-time condition, i.e., the time distance between two successive jumps is uniformly bounded from below and from above. In particular, the impulses are not restricted to occur in a periodic fashion.
In \cite{HolSch19b} we considered output-feedback gain-scheduling controller synthesis for periodic impulses and added only a few comments on the aperiodic case, which is often more relevant in practice.
The related details are worked out in full detail in this paper. In particular, we provide streamlined and insightful LMI conditions for the design of output-feedback controllers for aperiodic impulsive systems. For reasons of clarity and space, we do not address the extension to gain-scheduling, but emphasize that this is also possible.
Our synthesis procedure relies on a stability result from \cite{Bri13}, which involves so-called clock-dependent LMIs and is well-suited for controller design.
Due to the nature of the analysis result in \cite{Bri13}, the system matrices of the designed impulsive controllers will in general be clock-dependent and thus time-varying. We also propose another analysis result based on a combination of the one from \cite{Bri13} and the so called S-variable approach as extensively discussed in \cite{EbiPea15}; this allows for designing numerically favorable impulsive controllers with constant system matrices.

Output-feedback design results for aperiodic impulsive systems are scarce but can, e.g., be found in \cite{AntHes09}, \cite{MedLaw10}, \cite{Law12} and \cite{ZatPer17}. These rely on separation principles and/or on suitable generalizations of geometric techniques. While \cite{MedLaw10, Law12} focus merely on stabilization, output-feedback regulation is considered in \cite{ZatPer17}. A differential LMI approach to input-output finite-time stabilization is given in \cite{AmaDeT16}. 
Apart from \cite{Law12}, all of the above mentioned papers consider a rather specific structure of the underlying impulsive open-loop system description. In contrast, our design results allow for general linear impulsive systems and can, in particular, be employed for designing controllers for sample-data systems.
Moreover, we go beyond \cite{AmaDeT16} by showing that controller design is possible via parameter elimination, which leads to numerically favorable criteria if compared to a parameter transformation approach, and by providing a systematic procedure for the design of  controllers with constant system matrices.
Finally, we emphasize that our findings on stabilization can be seamlessly extended to more general situations such as gain-scheduling synthesis.

\textbf{Outline.} The present paper is structured as follows. After a short paragraph on notation, we introduce the considered class of impulsive systems and formulate the relevant underlying stability analysis conditions in terms of clock-dependent linear matrix inequalities.
Based on the latter, we derive novel dynamic output-feedback criteria for such impulsive systems by carefully combining several techniques for convexifying synthesis problems.
Afterwards, we demonstrate that our findings even extend to output-feedback design for aperiodic sample-data systems by representing the open-loop interconnection as an impulsive system. Finally, we illustrate our approach with a numerical example. Technical proofs are moved to the appendix.

\textbf{Notation.} $\N$ ($\N_0$) denotes the set of positive (nonnegative) integers and $\S^n$ is the set of symmetric real $n\times n$ matrices.
For a normed space $X$, a function $f:[0, \infty) \to X$ and $t > 0$ we let $f(t^-) := \lim_{s \nearrow t}f(s)$ denote the limit from below once it is well defined; for notational simplicity we set $f(0^-) := f(0)$.
Finally, objects that can be inferred by symmetry or are not relevant are indicated by ``$\bullet$''.

\section{Analysis}
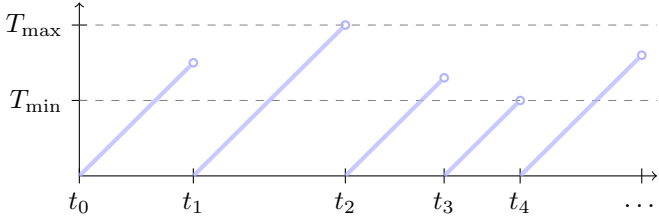
\begin{figure}
	\begin{center}
		\begin{tikzpicture}
		\draw[->] (0, 0) -- (7.6, 0);
		\draw[->] (0, 0) -- (0, 2.3);
		\draw[-] (-0.1, 2) -- node[at start, left]{$T_{\max}$}(0.1, 2);
		\draw[-] (-0.1, 1) -- node[at start, left]{$T_{\min}$}(0.1, 1);
		
		\draw[dashed, very thin, gray] (0, 1) -- (7.6, 1);
		\draw[dashed, very thin, gray] (0, 2) -- (7.6, 2);
		
		\draw[-] (0, -0.1) -- node[at start, below]{$t_0$}(0, 0.1);
		\draw[-] (1.5, -0.1) -- node[at start, below]{$t_1$}(1.5, 0.1);
		\draw[-] (3.5, -0.1) -- node[at start, below]{$t_2$}(3.5, 0.1);
		\draw[-] (4.8, -0.1) -- node[at start, below]{$t_3$}(4.8, 0.1);
		\draw[-] (5.8, -0.1) -- node[at start, below]{$t_4$}(5.8, 0.1);
		\draw[-] (7.4, -0.1) -- node[at start, below, yshift=-1ex]{$\dots$}(7.4, 0.1);
		
		\node[circle, thick, draw=blue!30!white, inner sep=1pt](C1) at (1.5, 1.5){};
		\node[circle, thick, draw=blue!30!white, inner sep=1pt](C2) at (3.5, 2){};
		\node[circle, thick, draw=blue!30!white, inner sep=1pt](C3) at (4.8, 1.3){};
		\node[circle, thick, draw=blue!30!white, inner sep=1pt](C4) at (5.8, 1){};
		\node[circle, thick, draw=blue!30!white, inner sep=1pt](C5) at (7.4, 1.6){};
		\draw[ultra thick, draw=blue!30!white, opacity=0.7] (0, 0) -- (C1);
		\draw[ultra thick, draw=blue!30!white, opacity=0.7] (1.5, 0) -- (C2);
		\draw[ultra thick, draw=blue!30!white, opacity=0.7] (3.5, 0) -- (C3);
		\draw[ultra thick, draw=blue!30!white, opacity=0.7] (4.8, 0) -- (C4);
		\draw[ultra thick, draw=blue!30!white, opacity=0.7] (5.8, 0) -- (C5);
		\end{tikzpicture}
	\end{center}
	\caption{The clock \eqref{ASD::eq::clock} for some $(t_k)_{k\in \N_0}$ satisfying \eqref{ASD::eq::range_dwell_time}.}
	\label{ASD::fig::clock}
\end{figure}

For a sequence of impulse instants $0 = t_0< t_1 < t_2< \dots$ and for some initial condition $x(0) \in \R^n$, let us consider a linear impulsive system with the description
\begin{subequations}
	\label{ASD::eq::sys}
	\begin{alignat}{2}
	\dot x(t) &= A(\th(t)) x(t),\label{ASD::eq::sysa}\\
	x(t_k) &= A_J(\th(t_k^-))x(t_k^-)\label{ASD::eq::sysb}
	\end{alignat}
\end{subequations}
for $t \geq 0$ and $k \in \N$. The function $\th$, which is defined as
\begin{equation}
\th(t) := t - t_k \teq{ for all }t \in [t_k, t_{k+1}) \text{ and }k\in\N_0,
\label{ASD::eq::clock}
\end{equation}
is the so-called clock and depends on the actual sequence of impulse instants $(t_k)_{k\in \N_0}$ as illustrated in Fig.~\ref{ASD::fig::clock}. 
Note that even for systems with constant system matrices, we will design controllers with clock-dependent matrices similarly as in \cite{Bri13}. Since the resulting closed-loop interconnection will be again of the form \eqref{ASD::eq::sys}, we start with presenting analysis conditions for such systems.
\\
In this paper we refer to \eqref{ASD::eq::sys} as impulsive LTV system and as impulse LTI system if the system matrices are constant.

We assume that the sequence $(t_k)_{k \in \N_0}$ satisfies the range dwell-time condition
\begin{equation}
t_{k} - t_{k-1} \in [T_{\min}, T_{\max}] \teq{for all} k \in \N
\label{ASD::eq::range_dwell_time}
\end{equation}
for some fixed $0 < T_{\min} < T_{\max}$. In particular, we do not require the jumps in \eqref{ASD::eq::sys} to appear in a periodic fashion. Other dwell-time conditions such as 
$t_k - t_{k-1} \in [T_{\min}, \infty)$\\ (minimum dwell-time) or $t_k -t_{k-1} = T_{\max}$ (exact dwell-time) for all $k\in \N$ can be handled with minor modifications, but in this paper we focus on \eqref{ASD::eq::range_dwell_time} for clarity.
In the sequel, we assume that $A:[0, T_{\max}] \to \R^{n\times n}$ and $A_J:[T_{\min}, T_{\max}]\to \R^{n\times n}$ are continuous functions, which, together with \eqref{ASD::eq::range_dwell_time}, ensures the existence of a unique piecewise continuously differentiable solution of \eqref{ASD::eq::sys}.

Our clock-dependent design is based on the following stability result that is essentially taken from \cite{Bri13}.

\begin{lemm}
	\label{ASD::lem::stability}
	System \eqref{ASD::eq::sys} is stable, i.e., there exist constants $M, \ga > 0$ such that
	\begin{equation*}
	\|x(t)\|\leq Me^{-\ga t} \|x(0)\|
	\teq{ for all }t\geq 0,
	\end{equation*}
	all initial conditions $x(0) \in \R^n$ and all $(t_k)_{k\in \N_0}$ with \eqref{ASD::eq::range_dwell_time}, if there exists some $X \in C^1([0, T_{\max}], \S^n)$ satisfying
	\begin{subequations}
		\label{ASD::lem::eq::stab}
		\begin{equation}
		X(\tau)\cg 0
		\label{ASD::lem::eq::stab_defi}
		\end{equation}
		and
		\begin{equation}
		\mat{c}{I \\ A(\tau)}^T \mat{cc}{\dot X(\tau) & X(\tau) \\ X(\tau) & 0}\mat{c}{I \\ A(\tau)} \cl 0
		\label{ASD::lem::eq::stab_flow}
		\end{equation}
		for all $\tau \in [0, T_{\max}]$ as well as
		\begin{equation}
		\mat{c}{I \\ A_J(\tau)}^T \mat{cc}{-X(\tau) & 0 \\ 0 & X(0)}\mat{c}{I \\ A_J(\tau)} \cl 0
		\label{ASD::lem::eq::stab_jump}
		\end{equation}
	\end{subequations}
	for all $\tau \in [T_{\min}, T_{\max}]$.
\end{lemm}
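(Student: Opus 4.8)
The plan is to certify exponential stability via the clock-dependent Lyapunov function $V(t) := x(t)^T X(\th(t)) x(t)$, whose behaviour is governed precisely by the two quadratic forms in \eqref{ASD::lem::eq::stab}. Carrying out the congruences shows that \eqref{ASD::lem::eq::stab_flow} reads $\dot X(\tau) + A(\tau)^T X(\tau) + X(\tau) A(\tau) \cl 0$ on $[0, T_{\max}]$ and that \eqref{ASD::lem::eq::stab_jump} reads $A_J(\tau)^T X(0) A_J(\tau) \cl X(\tau)$ on $[T_{\min}, T_{\max}]$. Since $X$ is continuous and positive definite on the compact interval $[0, T_{\max}]$, there are constants $0 < c_1 \le c_2$ with $c_1 I \cle X(\tau) \cle c_2 I$, whence $c_1 \|x(t)\|^2 \le V(t) \le c_2 \|x(t)\|^2$ for all $t \ge 0$; a decay estimate for $V$ will therefore translate directly into the claimed bound on $\|x(t)\|$.

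First I would treat the flow. On any interval $(t_k, t_{k+1})$ the clock obeys $\th(t) = t - t_k$, so $\dot\th \equiv 1$ and differentiating $V$ along \eqref{ASD::eq::sysa} gives $\dot V(t) = x(t)^T [\dot X(\th(t)) + A(\th(t))^T X(\th(t)) + X(\th(t)) A(\th(t))] x(t)$, which is exactly the flow form and hence negative. The quantitative step is to sharpen the pointwise strict inequality: since $\tau \mapsto \dot X(\tau) + A(\tau)^T X(\tau) + X(\tau) A(\tau)$ and $\tau \mapsto X(\tau)$ are continuous on the compact set $[0, T_{\max}]$, the former negative definite and the latter positive definite, there is some $\al > 0$ with $\dot X(\tau) + A(\tau)^T X(\tau) + X(\tau) A(\tau) \cle -2\al X(\tau)$ for all $\tau$. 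This yields $\dot V(t) \le -2\al V(t)$ and thus $V(t) \le e^{-2\al(t-t_k)} V(t_k)$ for $t \in [t_k, t_{k+1})$.

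Next I would treat the jumps. At an impulse instant $t_k$ the clock resets, so $V(t_k) = x(t_k)^T X(0) x(t_k)$, while $V(t_k^-) = x(t_k^-)^T X(\tau_k) x(t_k^-)$ with $\tau_k := t_k - t_{k-1} \in [T_{\min}, T_{\max}]$ by \eqref{ASD::eq::range_dwell_time}. Inserting \eqref{ASD::eq::sysb} and using \eqref{ASD::lem::eq::stab_jump} gives $V(t_k) = x(t_k^-)^T A_J(\tau_k)^T X(0) A_J(\tau_k) x(t_k^-) \le x(t_k^-)^T X(\tau_k) x(t_k^-) = V(t_k^-)$, i.e.\ $V$ does not increase across a jump. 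Chaining the flow estimate with this non-increase and telescoping over the impulses, I obtain for $t \in [t_k, t_{k+1})$ that $V(t) \le e^{-2\al(t-t_k)} \prod_{j=1}^{k} e^{-2\al(t_j - t_{j-1})} V(0) = e^{-2\al t} V(0)$, since the exponents sum to $t_k - t_0 = t_k$. Combined with the norm equivalence this produces $\|x(t)\| \le \sqrt{c_2/c_1}\, e^{-\al t}\|x(0)\|$, i.e.\ the claim with $M = \sqrt{c_2/c_1}$ and $\ga = \al$, and the bound is manifestly independent of the particular sequence $(t_k)$.

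I expect the main obstacle to be this uniformization, i.e.\ extracting a single rate $\al > 0$ valid for every admissible impulse sequence from the merely pointwise strict LMIs \eqref{ASD::lem::eq::stab_flow}--\eqref{ASD::lem::eq::stab_jump}. This is precisely where the continuity of $A$, $A_J$, $X$ and $\dot X$, together with the compactness of the intervals and the \emph{range} dwell-time bounds, enter: $T_{\max} < \infty$ keeps every clock value $\th(t)$ inside $[0, T_{\max}]$, the domain of \eqref{ASD::lem::eq::stab_flow}, while $T_{\min} > 0$ keeps each jump argument $\tau_k$ inside $[T_{\min}, T_{\max}]$, the domain of \eqref{ASD::lem::eq::stab_jump}. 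The remaining bookkeeping---differentiability of $V$ on the flow intervals and the left-limit conventions at jumps---is routine given the stated regularity of the data.
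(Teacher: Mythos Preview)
Your argument is correct and is precisely the standard clock-dependent Lyapunov proof: set $V(t)=x(t)^T X(\th(t))x(t)$, use compactness to turn the strict flow LMI into $\dot V\le -2\al V$ on each inter-jump interval, use the jump LMI to get $V(t_k)\le V(t_k^-)$, and telescope. The bookkeeping on $\th(t_k^-)=t_k-t_{k-1}\in[T_{\min},T_{\max}]$ and the uniformization via continuity on compacta are handled cleanly.

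As for comparison with the paper: there is nothing to compare. The paper does not prove this lemma; it explicitly states that the result ``is essentially taken from \cite{Bri13}'' and only supplies proofs for the synthesis Theorems in the appendix. Your proof is exactly the argument underlying that reference, so you have reconstructed what the paper outsourced.
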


Several remarks and additional insights about Lemma \ref{ASD::lem::stability} are given, e.g., in \cite{Bri13}. We merely emphasize that, in contrast to, e.g., lifting or looped-functional based approaches, the conditions \eqref{ASD::lem::eq::stab} are particularly well suited for deriving synthesis criteria as the system matrices $A$ and $A_J$ enter in a convex and very convenient fashion.
Moreover, 
these so-called clock-dependent LMI conditions can be turned into numerically tractable ones by restricting $X$ to be polynomial and by applying the matrix sum-of-squares (SOS) approach (\cite{Par00, SchHol06}). Further note that Lemma~\ref{ASD::lem::stability} can be viewed as a robust analysis result since the conditions \eqref{ASD::lem::eq::stab} guarantee stability for all sequences of impulse instants $(t_k)_{k\in \N_0}$ satisfying \eqref{ASD::eq::range_dwell_time}.

Next to impulsive LTV output-feedback controllers, we also show how to design impulsive LTI controllers. It is well-known in robust control theory the latter requires clock-independent certificates $X(\cdot)$ in Lemma~\ref{ASD::lem::stability}.
Instead of enforcing $X(\cdot)$ to be constant, we rely on the following less conservative analysis result which is based on the S-variable approach, as elaborated on in \cite{EbiPea15} and as originating from \cite{OliBer99}.

\begin{lemm}
	\label{ASD::lem::stability2}
	Suppose that $A$ and $A_J$ are constant. Then \eqref{ASD::eq::sys} is stable for all $(t_k)_{k\in \N_0}$ satisfying \eqref{ASD::eq::range_dwell_time} if there exist $X \in C^1([0, T_{\max}], \S^n)$ and $\rho > 0$, $G, G_J \in \R^{n\times n}$ satisfying
	\begin{subequations}
		\label{ASD::lem::eq::stab2}
		\begin{equation}
		X(\tau)\cg 0
		\label{ASD::lem::eq::stab2_defi}
		\end{equation}
		and
		\begin{equation}
		\arraycolsep=4pt
		\mat{cc}{\dot X(\tau) + A^TG^T + GA & X(\tau) + \rho A^TG^T - G \\
		X(\tau) + \rho GA - G^T & -\rho(G + G^T)} \cl 0
		\label{ASD::lem::eq::stab2_flow}
		\end{equation}
		for all $\tau \in [0, T_{\max}]$ as well as
		\begin{equation}
		\mat{cc}{-X(\tau) & A_J^TG_J^T \\ G_JA_J & X(0) - G_J - G_J^T} \cl 0
		\label{ASD::lem::eq::stab2_jump}
		\end{equation}
	\end{subequations}
	for all $\tau \in [T_{\min}, T_{\max}]$.
\end{lemm}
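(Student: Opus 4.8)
The plan is to show that the S-variable certificate furnished by \eqref{ASD::lem::eq::stab2} is already a certificate for the conditions \eqref{ASD::lem::eq::stab} of Lemma~\ref{ASD::lem::stability}, specialized to the constant matrices $A(\tau)\equiv A$ and $A_J(\tau)\equiv A_J$ and using the very same $X(\cdot)$. Since \eqref{ASD::lem::eq::stab2_defi} coincides with \eqref{ASD::lem::eq::stab_defi}, stability then follows immediately from Lemma~\ref{ASD::lem::stability}. The mechanism in both the flow and the jump case is a \emph{dilated factorization}: I rewrite each inequality so that the slack variable multiplies an annihilator, and then apply a congruence transformation that eliminates the slack variable altogether.

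For the flow inequality, a direct block expansion shows that \eqref{ASD::lem::eq::stab2_flow} is identical to
\[
\mat{cc}{\dot X(\tau) & X(\tau) \\ X(\tau) & 0} + \He\!\left[\mat{c}{G \\ \rho G}\mat{cc}{A & -I}\right]\cl 0 .
\]
The key observation is that $\mat{c}{I \\ A}$ right-annihilates $\mat{cc}{A & -I}$, i.e.\ $\mat{cc}{A & -I}\mat{c}{I \\ A}=0$. Performing the congruence of the displayed strict inequality with the full-column-rank matrix $\mat{c}{I \\ A}$ therefore annihilates the entire $\He[\,\cdot\,]$ term and yields
\[
\mat{c}{I \\ A}^T\mat{cc}{\dot X(\tau) & X(\tau) \\ X(\tau) & 0}\mat{c}{I \\ A} = \dot X(\tau)+A^TX(\tau)+X(\tau)A \cl 0 ,
\]
which is precisely \eqref{ASD::lem::eq::stab_flow} for $A(\tau)\equiv A$. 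The jump inequality is handled in the same way: \eqref{ASD::lem::eq::stab2_jump} equals
\[
\mat{cc}{-X(\tau) & 0 \\ 0 & X(0)} + \He\!\left[\mat{c}{0 \\ G_J}\mat{cc}{A_J & -I}\right]\cl 0 ,
\]
and the congruence with $\mat{c}{I \\ A_J}$, which right-annihilates $\mat{cc}{A_J & -I}$, recovers $-X(\tau)+A_J^TX(0)A_J\cl 0$, i.e.\ \eqref{ASD::lem::eq::stab_jump}. All hypotheses of Lemma~\ref{ASD::lem::stability} are thus in force.

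The only genuinely non-routine step is guessing the two dilated factorizations---that $G$ enters exactly as $\mat{c}{G \\ \rho G}\mat{cc}{A & -I}$ and $G_J$ as $\mat{c}{0 \\ G_J}\mat{cc}{A_J & -I}$; once these are written down, the annihilation is automatic. I expect the most error-prone part to be the bookkeeping that verifies these factorizations reproduce \eqref{ASD::lem::eq::stab2_flow} and \eqref{ASD::lem::eq::stab2_jump} block by block, but this is a finite computation. Note that for this (sufficiency) direction neither $\rho>0$ nor the invertibility of $G,G_J$ (which feasibility of \eqref{ASD::lem::eq::stab2} forces through $G+G^T\cg 0$ and $G_J+G_J^T\cg 0$) is actually needed; their role is to keep the S-variable conditions from being more conservative than those of Lemma~\ref{ASD::lem::stability}, a converse statement that is not required here.
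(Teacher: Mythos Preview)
Your proposal is correct and is essentially the same approach as the paper's: the paper simply says the proof proceeds by applying the elimination lemma (following \cite{Oli05}) to remove the slack variables $G$ and $G_J$, thereby recovering \eqref{ASD::lem::eq::stab} and invoking Lemma~\ref{ASD::lem::stability}. Your explicit dilated factorizations and congruence with the annihilators $\smat{I\\A}$, $\smat{I\\A_J}$ are precisely the sufficiency direction of that elimination step, spelled out by hand.
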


Following \cite{Oli05}, the proof is based on applying the elimination lemma to eliminate the slack-variables $G$ and $G_J$, which results in the conditions \eqref{ASD::lem::eq::stab}.

Note that the conditions \eqref{ASD::lem::eq::stab2} are more conservative than those in Lemma \ref{ASD::lem::stability}, because the matrix variables $G$, $G_J$ are parameter independent. Equivalence could be retrieved by taking $G$ and $G_J$ to be clock-dependent
(even with a clock-independent $\rho$), but this would prevent the derivation of convex conditions for impulsive LTI controller design. 

\section{Synthesis}

\subsection{Impulsive LTV Controller Design}

For a sequence $(t_k)_{k\in \N_0}$ satisfying \eqref{ASD::eq::range_dwell_time}, some initial condition $x(0)\in \R^n$ and real matrices $A$, $B$, $C$, $A_J$, $B_J$, $C_J$, we now consider an impulsive open-loop system of the form
\begin{subequations}
	\label{ASD::eq::sys_syn}%
	\begin{alignat}{2}
	\mat{c}{\dot x(t) \\ y(t)} &=
	\mat{cc}{A & B \\ C & 0} \mat{c}{x(t) \\ u(t)},
	\label{ASD::eq::sys_syna}\\
	\mat{c}{x(t_k) \\ y_J(k)} &= \mat{cc}{A_J & B_J \\ C_J & 0} \mat{c}{x(t_k^-) \\ u_J(k)}
	\label{ASD::eq::sys_synb}
	\end{alignat}
\end{subequations}
for $t\geq 0$ and $k\in \N$. Here, the signals $u$, $u_J$ and $y$, $y_J$ denote the control inputs and measurement outputs, respectively. Our objective in this subsection is the design of stabilizing dynamic output-feedback controllers for the system \eqref{ASD::eq::sys_syn} and described as
\begin{subequations}
	\label{ASD::eq::con}%
	\begin{alignat}{2}
	\mat{c}{\dot x_c(t) \\ u(t)} &= \mat{cc}{\Ag(\theta(t)) & \Bg(\theta(t))\\ \Cg(\theta(t)) & \Dg(\theta(t))} \mat{c}{x_c(t) \\ y(t)},
	\label{ASD::eq::cona}\\
	\mat{c}{ x_c(t_k) \\ u_J(k)} &= \mat{cc}{\AJg(\th(t_k^-)) & \BJg(\th(t_k^-)) \\ \CJg(\th(t_k^-)) & \DJg(\th(t_k^-))} \mat{c}{x_c(t_k^-) \\ y_J(k)}
	\label{ASD::eq::conb}
	\end{alignat}
\end{subequations}
for $t\geq 0$ and $k\in \N$ with continuous maps $\Ag$, $\Bg$, $\Cg$, $\Dg$, $\AJg$, $\BJg$, $\CJg$, $\DJg$ by relying on Lemma \ref{ASD::lem::stability}. Observe that the interconnection of \eqref{ASD::eq::sys_syn} and \eqref{ASD::eq::con} admits the structure
\begin{subequations}
	\label{ASD::eq::cl_syn}%
	\begin{alignat}{2}
	\dot x_{cl}(t) &= \Ac(\th(t)) x_{cl}(t), \label{ASD::eq::cl_syna}\\
	x_{cl}(t_k) &= \Ac_J(\th(t_k^-))x_{cl}(t_k^-)
	\label{ASD::eq::cl_synb}
	\end{alignat}
\end{subequations}
for $t\geq 0$ as well as $k\in \N$ and with $x_{cl} = \smat{x \\ x_c}$. Here, the maps $\Ac$ and $\Ac_J$ are given by
\begin{equation*}
\arraycolsep=1pt
\mat{cc}{A + B\Dg C & B \Cg  \\
	\Bg C & \Ag }
=
\mat{cc}{A & 0  \\ 0 & 0 } +
\mat{cc}{0 & B \\ I & 0} \mat{cc}{\Ag & \Bg \\ \Cg & \Dg}
\mat{cc}{0 & I\\ C & 0}
\end{equation*}
and
\begin{equation*}
\arraycolsep=0.5pt
\mat{cc}{\hspace{-0.5ex}A_{\!J} \!+\! B_{\!J} \DJg C_J & B_J\CJg\hspace{-0.25ex} \\
	\BJg C_J & \AJg }
\hspace{-0.75ex}=\hspace{-0.75ex}\mat{cc}{\hspace{-0.5ex}A_J & 0\hspace{-0.5ex}  \\ 0 & 0}\hspace{-0.4ex}+\hspace{-0.4ex}\mat{cc}{0 & B_J\hspace{-0.5ex} \\ I & 0}\hspace{-0.5ex}\mat{cc}{\AJg & \BJg \\ \CJg & \DJg}\hspace{-0.5ex}\mat{cc}{0 & I \\ \hspace{-0.5ex}C_J & 0 }\!,
\end{equation*}
respectively. Note that \eqref{ASD::eq::con} can be viewed as a gain-scheduling controller
whose implementation requires the knowledge of the clock-value $\th(t)$ and its left-limit $\th(t^-)$ at time $t$; this is the same as knowing the last jump time $t_k$ with $t_k<t$ and is reminiscent of the approach for static state-feedback controllers in \cite{Bri13}.
Further, observe that we can indeed apply Lemma~\ref{ASD::lem::stability} to \eqref{ASD::eq::cl_syn} since this interconnection is of the form \eqref{ASD::eq::sys}. As usual, trouble arises through the simultaneous search for some certificate $X$ and a controller \eqref{ASD::eq::con} which is a non-convex problem.

A possibility to circumvent this issue is the application of a convexifying parameter transformation that is by now well-known in the LMI literature and has been proposed in \cite{MasOha98} and \cite{Sch96b}. In our case, an extra issue results from the need to apply this transformation on the flow \eqref{ASD::eq::cl_syna} and jump component \eqref{ASD::eq::cl_synb} of the system \eqref{ASD::eq::cl_syn} simultaneously. 

\begin{theo}
	\label{ASD::theo::syn_trafo}
	There exists a controller \eqref{ASD::eq::con} for the system \eqref{ASD::eq::sys_syn} such that the LMIs \eqref{ASD::lem::eq::stab} are feasible for the corresponding closed-loop system if and only if there exist continuously differentiable $\Xr, \Yr$ and continuous $\Kr, \Lr, \Mr, \Nr$, $\KJr, \LJr, \MJr, \NJr$ satisfying
	\begin{subequations}
		\label{ASD::theo::eq::syn_trafo_lmis}%
	    \begin{equation}
		\Xb(\tau) \cg 0
		\label{ASD::theo::eq::syn_trafo_lmisa}
		\end{equation}
		and
		\begin{equation}
		\Zb(\tau) + \Ab(\tau)^T + \Ab(\tau) \cl 0
		\label{ASD::theo::eq::syn_trafo_lmisb}
		\end{equation}
		for all  $\tau \in [0, T_{\max}]$ as well as
		\begin{equation}
		\mat{cc}{\Xb(\tau) & \Ab_J(\tau)^T \\
			\Ab_J(\tau) & \Xb(0)} \cg 0
		\label{ASD::theo::eq::syn_trafo_lmisc}
		\end{equation}
	\end{subequations}
	for all $\tau \in [T_{\min}, T_{\max}]$. Here, the boldface matrix-valued maps are defined as
	\begin{equation*}
	\Xb := \mat{cc}{\Yr & I \\ I & \Xr},\quad
	\Zb := \mat{cc}{- \dot \Yr & 0 \\ 0 &  \dot \Xr},
	\end{equation*}
		\begin{equation*}
	\arraycolsep=2pt
	\Ab
	:= \mat{cc}{A\Yr & A \\ 0 & \Xr A}
	+ \mat{cc}{0 & B \\ I & 0}
	\mat{cc}{\Kr & \Lr \\ \Mr & \Nr}
	\mat{cc}{I & 0  \\ 0 & C}
	\end{equation*}
	and
	\begin{equation*}
	\arraycolsep=1pt
	\Ab_J
	:= \mat{cc}{A_J\Yr& A_J \\ 0 & \Xr(0) A_J}
	+ \mat{cc}{0 & B_J \\ I & 0 }\hspace{-0.5ex}
	\mat{cc}{\KJr & \LJr \\ \MJr & \NJr}\hspace{-0.5ex}
	\mat{cc}{I & 0 \\ 0 & C_J }\hspace{-0.5ex}.
	\end{equation*}
\end{theo}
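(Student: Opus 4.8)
The plan is to prove both implications by the by-now standard linearising change of variables for output-feedback synthesis (\cite{MasOha98}, \cite{Sch96b}), adapted to the \emph{simultaneous} treatment of the flow LMI \eqref{ASD::lem::eq::stab_flow} and the jump LMI \eqref{ASD::lem::eq::stab_jump}. Throughout I restrict to controllers of order $n_c = n$. I partition the closed-loop certificate and its inverse as $X = \smat{\Xr & U \\ U^T & \bullet}$ and $X^{-1} = \smat{\Yr & V \\ V^T & \bullet}$, and introduce the two congruence factors $\Pi_1 := \smat{\Yr & I \\ V^T & 0}$ and $\Pi_2 := \smat{I & \Xr \\ 0 & U^T}$. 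From $XX^{-1}=I$ one reads off $\Xr\Yr + UV^T = I$ and $\Yr\Xr + VU^T = I$, which yield the key relations $X\Pi_1 = \Pi_2$ and $\Pi_1^T X\Pi_1 = \Pi_1^T\Pi_2 = \Xb$. The controller data are linearised by the affine, invertible change of variables sending $(\Ag,\Bg,\Cg,\Dg)$ to $(\Kr,\Lr,\Mr,\Nr)$ through $\Nr = \Dg$, $\Mr = \Cg V^T + \Nr C\Yr$, $\Lr = U\Bg + \Xr B\Nr$ and a corresponding formula for $\Kr$, so that precisely $\Ab = \Pi_2^T\Ac\Pi_1$; the jump variables $(\KJr,\LJr,\MJr,\NJr)$ are defined by the analogous map, but with the left factor evaluated at clock value $0$ and the right factor at $\tau$, which produces exactly $\Ab_J = \Pi_2(0)^T\Ac_J\Pi_1(\tau)$.

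For necessity I congruence-transform the three closed-loop LMIs. Condition \eqref{ASD::lem::eq::stab_defi} becomes \eqref{ASD::theo::eq::syn_trafo_lmisa} upon congruence with $\Pi_1(\tau)$. The closed-loop flow LMI reads $\dot X + \Ac^T X + X\Ac \cl 0$ and becomes $\Pi_1^T\dot X\Pi_1 + \Ab^T + \Ab \cl 0$; the derivative term is the crux. Differentiating $\Pi_1^T X\Pi_1 = \Xb$ and using $X\Pi_1 = \Pi_2$ gives $\Pi_1^T\dot X\Pi_1 = \dot\Xb - \dot\Pi_1^T\Pi_2 - \Pi_2^T\dot\Pi_1$, whose diagonal blocks evaluate to $-\dot\Yr$ and $\dot\Xr$ (the sign flip on the $(1,1)$ block arises because $\Yr$ stems from the inverse $X^{-1}$, so that $\dot\Pi_1^T\Pi_2 + \Pi_2^T\dot\Pi_1$ contributes $2\dot\Yr$ there). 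This is exactly $\Zb$ up to an off-diagonal remainder, and that remainder is absorbed once and for all into the definition of $\Kr$. The jump LMI \eqref{ASD::lem::eq::stab_jump} I first rewrite, by a Schur complement using $X(0)\cg 0$, as $\smat{X(\tau) & \Ac_J^T X(0) \\ X(0)\Ac_J & X(0)}\cg 0$ and then congruence-transform with $\diag(\Pi_1(\tau),\Pi_1(0))$; using $X(0)\Pi_1(0)=\Pi_2(0)$ this collapses to \eqref{ASD::theo::eq::syn_trafo_lmisc}, the mixed-time evaluation explaining the appearance of both $\Yr(\tau)$ and $\Xr(0)$ in $\Ab_J$.

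For sufficiency I reverse these steps. Given $C^1$ maps $\Xr,\Yr$ with $\Xb\cg 0$, a Schur complement shows $\Xr\cg 0$ and $\Yr - \Xr^{-1}\cg 0$, so $I-\Xr\Yr$ is invertible; I factor it as $UV^T = I-\Xr\Yr$ with square invertible and, crucially, $C^1$ factors, for instance $U = \Xr$ and $V^T = \Xr^{-1}-\Yr$. This recovers $\Pi_1,\Pi_2$ and a positive-definite completion $X$, and inverting the change of variables returns the controller matrices \eqref{ASD::eq::con}; the derivative remainder folded into $\Kr$ above is now read as the reconstruction formula for $\Ag(\tau)$, which is why the synthesised controller is in general clock-dependent even for an impulsive LTI plant. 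Undoing all congruences turns \eqref{ASD::theo::eq::syn_trafo_lmis} back into \eqref{ASD::lem::eq::stab} for the closed loop, so that Lemma~\ref{ASD::lem::stability} applies.

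The main obstacle is the flow derivative term together with the fact that the congruence factor $\Pi_1$ is itself clock-dependent: unlike the classical time-invariant case one has $\frac{d}{d\tau}(\Pi_1^T X\Pi_1)\neq \Pi_1^T\dot X\Pi_1$, and one must verify that the off-diagonal cross-terms produced by $\dot\Pi_1$ are cleanly absorbable into $\Kr$ while keeping the jump transformation consistent at the two clock values $0$ and $\tau$. A secondary point to check is that $U,V$ can be chosen $C^1$ (so that the reconstructed $\Ag$ is continuous) and invertible throughout $[0,T_{\max}]$, which is exactly what $\Xb\cg 0$ guarantees.
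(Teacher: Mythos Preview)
Your proposal is correct and follows essentially the same route as the paper: the same congruence factors (your $\Pi_1,\Pi_2$ are the paper's $\Yc,\Zc^T$), the same explicit $C^1$ factorisation $U=\Xr$, $V^T=\Xr^{-1}-\Yr$ of $I-\Xr\Yr$, and the same handling of the derivative cross-terms $\dot\Xr\Yr+\dot U V^T$ by folding them into the reconstruction formula for $\Kr$/$\Ag$. The paper only spells out sufficiency and remarks that necessity is obtained by reversing the arguments, which is precisely the direction you sketched first.
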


A constructive proof is given in the appendix. In contrast to the case of periodic impulses considered in \cite{HolSch19b}, the variables $\KJr, \LJr, \MJr, \NJr$ and thus also the system matrices $\AJg$, $\BJg$, $\CJg$, $\DJg$ vary continuously on $[T_{\min}, T_{\max}]$ instead of being constant.
Moreover, observe that the LMIs \eqref{ASD::theo::eq::syn_trafo_lmis} are indeed affine in all decision variables and thus tractable, e.g., by using the SOS approach.

As an alternative, we can utilize the elimination lemma in (\cite{GahApk94, Hel99}) in combination with the continuous selection theorem of \cite{Mic56}. They can either be applied directly to the conditions \eqref{ASD::lem::eq::stab} for the closed-loop system \eqref{ASD::eq::cl_syn} or to the LMIs in Theorem \ref{ASD::theo::syn_trafo}. In particular, we can eliminate almost all of the appearing variables to obtain the following result.

\begin{theo}
	\label{ASD::theo::syn_elim}
	Let $U$, $V$, $U_J$ and $V_J$ be basis matrices of $\ker(B^T)$, $\ker(C)$, $\ker(B_J^T)$ and $\ker(C_J)$, respectively. Then there exists a controller \eqref{ASD::eq::con} for the system \eqref{ASD::eq::sys_syn} such that the LMIs \eqref{ASD::lem::eq::stab} are feasible for the corresponding closed-loop system if and only if there exist continuously differentiable $\Xr, \Yr$ satisfying
	\begin{subequations}
		\label{ASD::theo::eq::syn_elim_lmi}%
		\begin{equation}
		\mat{cc}{\Yr(\tau) & I \\ I & \Xr(\tau)} \cg 0,
		\label{ASD::theo::eq::syn_elim_lmia}%
		\end{equation}
		\begin{equation}
		V^T \mat{c}{I \\ A}^T\mat{cc}{\dot \Xr(\tau) & \Xr(\tau) \\ \Xr(\tau) & 0}\mat{c}{I \\ A} V \cl 0
		\label{ASD::theo::eq::syn_elim_lmib}%
		\end{equation}
		and
		\begin{equation}
		U^T \mat{c}{-A^T \\ I}^T\mat{cc}{0 & \Yr(\tau) \\ \Yr(\tau) & \dot \Yr(\tau)}\mat{c}{-A^T \\ I}  U \cg 0
		\label{ASD::theo::eq::syn_elim_lmic}%
		\end{equation}
		for all $\tau \in [0, T_{\max}]$ as well as
		\begin{equation}
		V_J^T\mat{cc}{I \\ A_J}^T\mat{cc}{-\Xr(\tau) & 0 \\ 0 & \Xr(0)}\mat{c}{I \\ A_J}V_J \cl 0
		\label{ASD::theo::eq::syn_elim_lmid}%
		\end{equation}
		and
		\begin{equation}
		U_J^T\mat{c}{-A_J^T \\ I}^T \mat{cc}{-\Yr(\tau) & 0 \\ 0 & \Yr(0)}\mat{c}{-A_J^T \\ I}U_J \cg 0
		\label{ASD::theo::eq::syn_elim_lmie}%
		\end{equation}
	\end{subequations}
	for all $\tau \in [T_{\min}, T_{\max}]$.	
\end{theo}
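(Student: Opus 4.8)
The plan is to establish the equivalence by eliminating the controller-related variables from the synthesis LMIs of Theorem~\ref{ASD::theo::syn_trafo} with the elimination lemma of \cite{GahApk94, Hel99}, and then to restore continuity of the eliminated data by a continuous selection. To this end I would write the flow and jump data of Theorem~\ref{ASD::theo::syn_trafo} as $\Ab = \Ab_0 + \mathcal{B}\,\Theta\,\mathcal{C}$ and $\Ab_J = \Ab_{J,0} + \mathcal{B}_J\,\Theta_J\,\mathcal{C}_J$, where $\Ab_0 := \smat{A\Yr & A \\ 0 & \Xr A}$, $\mathcal{B} := \smat{0 & B \\ I & 0}$, $\mathcal{C} := \smat{I & 0 \\ 0 & C}$, $\Theta := \smat{\Kr & \Lr \\ \Mr & \Nr}$, and analogously for the jump part. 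The key structural fact is that $\Theta$ enters only the flow LMI \eqref{ASD::theo::eq::syn_trafo_lmisb} and $\Theta_J$ only the jump LMI \eqref{ASD::theo::eq::syn_trafo_lmisc}, while \eqref{ASD::theo::eq::syn_trafo_lmisa} coincides with \eqref{ASD::theo::eq::syn_elim_lmia}; hence the two eliminations decouple and can be carried out pointwise in $\tau$.

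For the flow, \eqref{ASD::theo::eq::syn_trafo_lmisb} reads $\Psi + \mathcal{B}\Theta\mathcal{C} + \mathcal{C}^T\Theta^T\mathcal{B}^T \cl 0$ with $\Psi := \Zb + \Ab_0 + \Ab_0^T$, so the elimination lemma yields a solution $\Theta$ exactly when the two compressions $N_{\mathcal B}^T\Psi N_{\mathcal B}\cl 0$ and $N_{\mathcal C}^T\Psi N_{\mathcal C}\cl 0$ hold, where $N_{\mathcal B}$, $N_{\mathcal C}$ are bases of $\ker(\mathcal B^T)$, $\ker(\mathcal C)$. Since $\ker(\mathcal B^T) = \mathrm{span}\,\smat{U\\0}$ and $\ker(\mathcal C) = \mathrm{span}\,\smat{0\\V}$, reading off the $(1,1)$- and $(2,2)$-blocks of $\Psi$, namely $A\Yr + \Yr A^T - \dot\Yr$ and $\Xr A + A^T\Xr + \dot\Xr$, turns these two compressions into \eqref{ASD::theo::eq::syn_elim_lmic} and \eqref{ASD::theo::eq::syn_elim_lmib}, respectively; this is a routine computation.

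For the jump, eliminating $\Theta_J$ from \eqref{ASD::theo::eq::syn_trafo_lmisc} likewise produces two compressions, but these are still $2\times 2$ block inequalities that must be condensed. I would reduce them by Schur complements, using the elementary identities $\Xb^{-1}\smat{I\\\Xr} = \smat{0\\I}$ and $\Xb^{-1}\smat{\Yr\\I} = \smat{I\\0}$, which are immediate from $\Xb = \smat{\Yr & I\\ I & \Xr}$ and valid since $\Xb\cg 0$ by \eqref{ASD::theo::eq::syn_trafo_lmisa}. With these, the off-diagonal blocks collapse so that the $\ker(C_J)$- and $\ker(B_J^T)$-compressions become exactly \eqref{ASD::theo::eq::syn_elim_lmid} and \eqref{ASD::theo::eq::syn_elim_lmie}. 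Together with the flow step and the identity \eqref{ASD::theo::eq::syn_elim_lmia}, this shows that the LMIs of Theorem~\ref{ASD::theo::syn_trafo} are feasible for some $\Kr,\dots,\NJr$ if and only if \eqref{ASD::theo::eq::syn_elim_lmi} holds.

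Necessity is then immediate: a controller rendering \eqref{ASD::lem::eq::stab} feasible yields, via Theorem~\ref{ASD::theo::syn_trafo}, feasible $\Xr,\Yr$ together with the remaining variables, and the necessity part of the elimination lemma applied at each $\tau$ gives \eqref{ASD::theo::eq::syn_elim_lmi}. I expect the main obstacle to be sufficiency, because the elimination lemma only provides, for each fixed $\tau$, \emph{some} $\Theta(\tau)$ and $\Theta_J(\tau)$, whereas Theorem~\ref{ASD::theo::syn_trafo} requires \emph{continuous} maps $\Kr,\dots,\NJr$. I would resolve this with the continuous selection theorem of \cite{Mic56}: for each $\tau$ the set of admissible $\Theta(\tau)$ is nonempty by the equivalence just established, and it is convex and open since it is the solution set of a strict LMI whose data depend continuously on $\tau$; consequently the associated set-valued map is lower semicontinuous with convex values in a finite-dimensional space, and Michael's theorem (after the standard refinement to a closed convex subselection) furnishes a continuous selection, and likewise for $\Theta_J$. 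Feeding the resulting continuous $\Kr,\dots,\NJr$ together with the given $C^1$ maps $\Xr,\Yr$ back into the sufficiency part of Theorem~\ref{ASD::theo::syn_trafo} produces the desired controller, completing the proof.
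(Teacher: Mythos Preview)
Your proposal is correct and follows essentially the same approach as the paper: the paper explicitly states that the result follows by applying the elimination lemma of \cite{GahApk94, Hel99} either directly to the closed-loop LMIs \eqref{ASD::lem::eq::stab} or to the LMIs of Theorem~\ref{ASD::theo::syn_trafo}, combined with Michael's continuous selection theorem \cite{Mic56} to ensure continuity of the reconstructed controller data. You take the second of these two routes and carry out the computations in more detail than the paper does; in particular, your Schur-complement reduction for the jump LMI via the identities $\Xb^{-1}\smat{I\\\Xr}=\smat{0\\I}$ and $\Xb^{-1}\smat{\Yr\\I}=\smat{I\\0}$ is exactly the right way to collapse the $2\times 2$ block compressions to \eqref{ASD::theo::eq::syn_elim_lmid} and \eqref{ASD::theo::eq::syn_elim_lmie}.
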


The maps in \eqref{ASD::eq::con} can be reconstructed by building the certificate $\Xc$ as in the proof of Theorem \ref{ASD::theo::syn_trafo} and by pointwise using the elimination lemma  as described in \cite{GahApk94} on the LMIs \eqref{ASD::lem::eq::stab_flow} and \eqref{ASD::lem::eq::stab_jump} for \eqref{ASD::eq::cl_syn}. The continuous selection theorem ensures that it is always possible to obtain continuous maps in \eqref{ASD::eq::con} in this fashion.

\begin{rema}
	\begin{itemize}
		\item Due to the much smaller number of decision variables, it is typically preferable to work with Theorem \ref{ASD::theo::syn_elim} instead of Theorem \ref{ASD::theo::syn_trafo}.
		\item Both theorems can be extended in a straightforward fashion to also incorporate quadratic performance criteria on the flow, jump or mixtures of both components of the resulting closed-loop system.
	\end{itemize}
\end{rema}

\subsection{Impulsive LTI Controller Design}

In this subsection and in contrast to the previous one, our goal is the design of stabilizing impulsive output-feedback controllers for \eqref{ASD::eq::sys_syn} with constant system matrices. This amounts to synthesizing stabilizing controllers with matrices $\Ag$, $\Bg$, $\Cg$, $\Dg$, $\AJg$, $\BJg$, $\CJg$, $\DJg$ and a description%
\begin{subequations}
	\label{ASD::eq::con2}%
	\begin{alignat}{2}
	\mat{c}{\dot x_c(t) \\ u(t)} &= \mat{cc}{\Ag & \Bg\\ \Cg & \Dg} \mat{c}{x_c(t) \\ y(t)},
	\label{ASD::eq::con2a}\\
	\mat{c}{ x_c(t_k) \\ u_J(k)} &= \mat{cc}{\AJg & \BJg \\ \CJg & \DJg} \mat{c}{x_c(t_k^-) \\ y_J(k)}
	\label{ASD::eq::con2b}
	\end{alignat}
\end{subequations}
for $t\geq 0$ and $k\in \N$  The corresponding closed-loop interconnection is then of the form 
\begin{subequations}%
	\label{ASD::eq::cl_syn2}%
	\begin{alignat}{2}%
	\dot x_{cl}(t) &= \Ac x_{cl}(t), \\
	x_{cl}(t_k) &= \Ac_Jx_{cl}(t_k^-)
	\label{ASD::eq::cl_syn2b}
	\end{alignat}
\end{subequations}
for $t\geq 0$ as well as $k\in \N$ and with $x_{cl} = \smat{x \\ x_c}$. The matrices $\Ac$ and $\Ac_J$ are structured as in the previous subsection but do not depend on any parameter. In particular, we can apply Lemma \ref{ASD::lem::stability2} for the stability analysis of \eqref{ASD::eq::cl_syn2}, but recall that this comes along with some conservatism. 
Note that for the implementation of the controller, it is still needed to have precise knowledge about the jump instances $t_k$ up to time $t$ available on-line. It might be possible to circumvent this requirement based on approaches as, e.g., the one given in \cite{XiaXia14} involving techniques from robust control, but this is beyond the scope of this paper.

Similar as before we can apply the convexifying parameter transformation from \cite{DeoGer02} on the LMIs \eqref{ASD::lem::eq::stab2} in Lemma \ref{ASD::lem::stability2} in order to obtain an LMI solution for output-feedback impulsive LTI controller design.

\begin{theo}
	\label{ASD::theo::syn_trafo2}
	There exists a controller \eqref{ASD::eq::con2} for the system \eqref{ASD::eq::sys_syn} such that the LMIs \eqref{ASD::lem::eq::stab2} are feasible for the corresponding closed-loop system if there exist some $\rho > 0$, a continuously differentiable $\Xb$ and matrices $\Gr$, $\Hr$, $S$, $\GJr$, $S_J$ as well as  $\Kr, \Lr, \Mr, \Nr$, $\KJr, \LJr, \MJr, \NJr$ such that
	\begin{subequations}
		\label{ASD::theo::eq::syn_trafo_lmis2}%
		\begin{equation}
		\Xb(\tau) \cg 0
		\label{ASD::theo::eq::syn_trafo_lmis2a}
		\end{equation}
		and
		\begin{equation}
		\mat{cc}{\dot \Xb(\tau) + \Ab^T + \Ab & \Xb(\tau) + \rho \Ab^T - \Gb \\ \Xb(\tau)+\rho \Ab - \Gb^T & -\rho(\Gb + \Gb^T)} \cl 0
		\label{ASD::theo::eq::syn_trafo_lmis2b}
		\end{equation}
		for all  $\tau \in [0, T_{\max}]$ as well as
		\begin{equation}
		\mat{cc}{-\Xb(\tau) & \Ab_J^T \\ \Ab_J & \Xb(0) - \Gb_J-\Gb_J^T} \cl 0
		\label{ASD::theo::eq::syn_trafo_lmis2c}
		\end{equation}
	\end{subequations}
	for all $\tau \in [T_{\min}, T_{\max}]$. Here, the boldface matrices $\Gb$, $\Gb_J$, $\Ab$, $\Ab_J$ are defined as
	\begin{equation*}
	\Gb := \mat{cc}{\Hr & I \\ S & \Gr},\quad
	\Gb_J := \mat{cc}{\Hr & I \\ S_J & \GJr},
	\end{equation*}
	\begin{equation*}
	\arraycolsep=2pt
	\Ab
	:= \mat{cc}{A\Hr & A \\ 0 & \Gr A}
	+ \mat{cc}{0 & B \\ I & 0}
	\mat{cc}{\Kr & \Lr \\ \Mr & \Nr}
	\mat{cc}{I & 0  \\ 0 & C}
	\end{equation*}
	and
	\begin{equation*}
	\arraycolsep=1pt
	\Ab_J
	:= \mat{cc}{A_J\Hr & A_J \\ 0 & \GJr A_J}
	+ \mat{cc}{0 & B_J \\ I & 0 }\hspace{-1ex}
	\mat{cc}{\KJr & \LJr \\ \MJr & \NJr}\hspace{-1ex}
	\mat{cc}{I & 0 \\ 0 & C_J }.
	\end{equation*}
\end{theo}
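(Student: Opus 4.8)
The plan is to prove sufficiency constructively: starting from any feasible tuple $(\rho,\Xb,\Gr,\Hr,S,\GJr,S_J,\Kr,\dots,\NJr)$ of \eqref{ASD::theo::eq::syn_trafo_lmis2}, I would exhibit a controller \eqref{ASD::eq::con2} together with some $X\in C^1$, the same $\rho$, and constant slack variables $G,G_J$ rendering the closed-loop conditions \eqref{ASD::lem::eq::stab2} feasible; stability then follows from Lemma \ref{ASD::lem::stability2}. The guiding observation is that the closed-loop flow and jump LMIs are the image of \eqref{ASD::theo::eq::syn_trafo_lmis2b} and \eqref{ASD::theo::eq::syn_trafo_lmis2c} under a single symmetric congruence $\diag(\mathcal{T},\mathcal{T})$ with a \emph{constant} invertible $\mathcal{T}$, under the identifications $\Xb=\mathcal{T}^TX\mathcal{T}$, $\Gb=\mathcal{T}^TG\mathcal{T}$, $\Gb_J=\mathcal{T}^TG_J\mathcal{T}$ and $\Ab=\mathcal{T}^TG\Ac\mathcal{T}$, $\Ab_J=\mathcal{T}^TG_J\Ac_J\mathcal{T}$. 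Because $\mathcal{T}$ is independent of $\tau$, the awkward derivative term transforms cleanly as $\dot\Xb=\mathcal{T}^T\dot X\mathcal{T}$, and the reconstructed controller automatically has constant matrices -- which is exactly the reason for invoking the S-variable result of Lemma \ref{ASD::lem::stability2} instead of forcing a clock-independent certificate in Lemma \ref{ASD::lem::stability}.

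Following the convexifying parameter transformation of \cite{DeoGer02} and paralleling the construction used for Theorem \ref{ASD::theo::syn_trafo}, I would obtain $\mathcal{T}$ from a partition of the slack variable and a completion argument. Note first that the $(2,2)$-blocks of \eqref{ASD::theo::eq::syn_trafo_lmis2b} and \eqref{ASD::theo::eq::syn_trafo_lmis2c} force $\Gb+\Gb^T\cg0$ and $\Gb_J+\Gb_J^T\cg\Xb(0)\cg0$, so $\Gb$ and $\Gb_J$ are invertible. Using $\Hr$ and $\Gr$ together with invertible factors of $I-\Hr\Gr$ I would build a constant invertible $\mathcal{T}$, then set $G:=\mathcal{T}^{-T}\Gb\mathcal{T}^{-1}$, $G_J:=\mathcal{T}^{-T}\Gb_J\mathcal{T}^{-1}$ and $X:=\mathcal{T}^{-T}\Xb\mathcal{T}^{-1}\cg0$. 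The controller matrices are recovered by inverting the affine change of variables $(\Ag,\Bg,\Cg,\Dg)\mapsto(\Kr,\Lr,\Mr,\Nr)$ and $(\AJg,\BJg,\CJg,\DJg)\mapsto(\KJr,\LJr,\MJr,\NJr)$; this map is bijective because the square completion blocks of $\mathcal{T}$ that pre- and post-multiply $\smat{\Ag & \Bg\\ \Cg & \Dg}$ are invertible by construction. Substituting $\Ac=\smat{A & 0\\ 0 & 0}+\smat{0 & B\\ I & 0}\smat{\Ag & \Bg\\ \Cg & \Dg}\smat{0 & I\\ C & 0}$ into $\mathcal{T}^TG\Ac\mathcal{T}$ then reproduces the prescribed affine form of $\Ab$, and analogously for $\Ab_J$.

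The step I expect to be the main obstacle -- and the genuinely new feature compared with standard single-LMI output-feedback synthesis and with the periodic case of \cite{HolSch19b} -- is that one and the same constant $\mathcal{T}$ and one and the same controller state must simultaneously linearize the flow LMI (organized around $G$) and the jump LMI (organized around $G_J$). This is exactly what forces $\Gb$ and $\Gb_J$ to share the block $\Hr$ in position $(1,1)$ and the identity in position $(1,2)$: these two blocks encode the common realization of the controller state used by both the continuous and the impulsive dynamics, whereas the free blocks $S,\Gr$ and $S_J,\GJr$ absorb the independent degrees of freedom of the two slack variables and of the separate flow and jump controller matrices. The crux is to verify that this shared structure is compatible with a single invertible $\mathcal{T}$ while still permitting the reconstruction of consistent constant controller matrices.

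Once the completion and the bijectivity of the change of variables are in place, the remainder is routine: applying the $\diag(\mathcal{T},\mathcal{T})$-congruence to \eqref{ASD::lem::eq::stab2_flow} and \eqref{ASD::lem::eq::stab2_jump} for the closed loop and substituting the change of variables turns them precisely into \eqref{ASD::theo::eq::syn_trafo_lmis2b} and \eqref{ASD::theo::eq::syn_trafo_lmis2c}, while \eqref{ASD::lem::eq::stab2_defi} becomes \eqref{ASD::theo::eq::syn_trafo_lmis2a}. I would only establish the stated implication (``if''), not its converse: since the closed-loop slack variables have been restricted to the common-$\mathcal{T}$ form with constant blocks, a general feasible closed-loop solution need not be of this type, consistent with the observation after Lemma \ref{ASD::lem::stability2} that equivalence would require clock-dependent $G$ and $G_J$.
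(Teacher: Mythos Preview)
Your proposal is essentially correct and follows the same route as the paper: a single constant congruence $\mathcal{T}$ (the paper's $\Yc$), the de~Oliveira--Geromel parameter transformation applied to the S-variable LMIs, and the key observation that sharing the $(1,1)$-block $\Hr$ in $\Gb$ and $\Gb_J$ is precisely what permits one common $\mathcal{T}$ to linearize both the flow and the jump inequality. You also correctly note that only sufficiency is claimed.

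One detail to adjust: the completion you propose, ``invertible factors of $I-\Hr\Gr$'', is the Lyapunov-based completion used in Theorem~\ref{ASD::theo::syn_trafo} (where $UV^T=I-\Xr\Yr$ follows from the coupling LMI $\smat{\Yr & I\\ I & \Xr}\cg0$). Here there is no analogous coupling between $\Hr$ and $\Gr$, so $I-\Hr\Gr$ need not be invertible, and in any case it is not the relevant identity. The paper instead takes $\Yc=\smat{\Hr & I\\ V^T & 0}$ with $V:=\Hr^T$, which is invertible because the $(2,2)$-block of \eqref{ASD::theo::eq::syn_trafo_lmis2b} gives $\Hr+\Hr^T\cg0$; the completion conditions are $\Gr\Hr+UV^T=S$ and $\GJr\Hr+U_JV^T=S_J$, solved explicitly by $U:=S\Hr^{-1}-\Gr$ and $U_J:=S_J\Hr^{-1}-\GJr$. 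With this $\Yc$ in place of your $\mathcal{T}$, your congruence argument and controller reconstruction go through exactly as you describe.
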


The proof proceeds along the lines of the one of Theorem \ref{ASD::theo::syn_trafo} and is sketched as follows. Once the LMIs \eqref{ASD::theo::eq::syn_trafo_lmis2} are feasible, we can find $U$, $U_J$ and $V$ satisfying
\begin{equation*}
\Gr \Hr +UV^T = S
\teq{ and }
\GJr \Hr + U_JV^T = S_J,
\end{equation*}
such as $V := \Hr^T$, $U := S\Hr^{-1} - \Gr$ and $U_J := S_J\Hr^{-1} - \GJr$. The controller matrices in \eqref{ASD::eq::con2a}, \eqref{ASD::eq::con2b} are then given by
\begin{equation*}
\mat{cc}{U & \Gr B \\ 0 & I}^{-1} \mat{cc}{\Kr - \Gr A\Hr & \Lr \\ \Mr & \Nr} \mat{cc}{V^T & 0 \\ C\Hr & I}^{-1},
\end{equation*}
\begin{equation*}
\mat{cc}{U_J & \GJr B_J \\ 0 & I}^{-1} \mat{cc}{\KJr - \GJr A_J \Hr & \LJr \\ \MJr & \NJr} \mat{cc}{V^T & 0 \\ C_J \Hr & I}^{-1}.
\end{equation*}
Finally, \emph{all} three inequalities \eqref{ASD::theo::eq::syn_trafo_lmis2} are converted by congruence transformations based on $\Yc^{-1}:= \smat{\Hr & I \\ V^T & 0}^{\!-\!1}$ into the three LMIs~\eqref{ASD::lem::eq::stab2} for the closed-loop system \eqref{ASD::eq::cl_syn2} and the variables
$\Gg:=\smat{G & SH^{-\!1}-G \\  H^{-\!T}\!- G & G-SH^{-\!1}}$, 
$\GJg:=\smat{\GJr & S_JH^{-\!1}-G_J \\ H^{-\!T}\!-G_J & G_J - S_JH^{-\!1}}$ as well as $\Xc := \Yc^{-T}\Xb \Yc^{-1}$.

In contrast to \cite{DeoGer02} for multi-objective control, we work with matrices $\Gc$ and $\Gc_J$ that are only partially coupled with an identical choice of the block $H$. In particular, we do \emph{not} require the equality $\Gc = \Gc_J$ which reduces conservatism. 
Note that it is not possible to completely avoid a coupling between $\Gc$ and $\Gc_J$ for synthesis based on parameter transformations, as $\Xc$ appears in both LMIs \eqref{ASD::lem::eq::stab2_flow} and \eqref{ASD::lem::eq::stab2_jump}. 
This is also why the conditions in Theorem~\ref{ASD::theo::syn_trafo2} are no longer necessary, which is in contrast to the clock-dependent design criteria in Theorem \ref{ASD::theo::syn_trafo}.

Note that it is generally not possible to eliminate the constant matrix variables $\Kr, \Lr, \Mr, \Nr$, $\KJr, \LJr, \MJr, \NJr$ from the clock-dependent LMIs \eqref{ASD::theo::eq::syn_trafo_lmis2}
since their reconstruction would require a robust version of the elimination lemma. Unfortunately, such a version is only available in specific situations as pointed out by \cite{Oli05}.


\section{Application to Sample-Data Systems}

For a sequence $(t_k)_{k\in \N_0}$ satisfying \eqref{ASD::eq::range_dwell_time}, real matrices and some initial condition $x(0) \in \R^n$, we now consider a system
\begin{subequations}
	\label{ASD::eq::sd_sys}%
\begin{equation}
\dot x(t) = Ax(t) + Bu(t),\quad y_J(k) = C_J x(t_k^-)
\label{ASD::eq::sd_sysa}
\end{equation}
with the control input $u$ being restricted as
\begin{equation}
u(t) = u(t_k)
\text{ ~for all~ } t\in [t_k, t_{k+1})
\text{ and }k\in \N_0.
\label{ASD::eq::sd_sysb}%
\end{equation}
\end{subequations}
In particular, only output samples are available for control and the control input is the result of a zero-order-hold operation. It is well-known that such sampled-data systems can be reformulated as impulsive systems. This enables us to perform dynamic output-feedback controller design for such systems with aperiodic sampling times based on the presented results with ease.
To this end, the condition \eqref{ASD::eq::sd_sysb} is handled by viewing $u$ as an additional state. This allows us to reformulate the system \eqref{ASD::eq::sd_sys} as
\begin{subequations}
	\label{ASD::eq::sd_sye}%
	\begin{alignat}{2}
	\mat{c}{\dot x(t) \\ \dot{u}(t) \\ \hline y(t)} &=
	\mat{cc|c}{A & B & 0 \\ 0 & 0 & 0 \\ \hline 0 & 0 & 0} \mat{c}{x(t) \\ u(t) \\ \hline \h u(t)},
	\label{ASD::eq::sd_syea}\\
	\mat{c}{x(t_k) \\ u(t_k) \\ \hline y_J(k)} &= \mat{cc|c}{I & 0 & 0 \\ 0 & 0 & I \\ \hline C_J & 0 & 0} \mat{c}{x(t_k^-) \\  u(t_k^-) \\ \hline u_J(k)}
	\label{ASD::eq::sd_syeb}
	\end{alignat}
\end{subequations}
for all $t \geq 0$ and all $k \in \N$, which is clearly a special case of the  description \eqref{ASD::eq::sys_syn}. This immediately leads to the following result which is a consequence of Theorem~\ref{ASD::theo::syn_elim}; Theorems \ref{ASD::theo::syn_trafo} and \ref{ASD::theo::syn_trafo2} could be employed here in exactly the same fashion.

\begin{coro}
	\label{ASD::coro::syn_elim}
	Let  $\h A \!=\! \smat{A & B \\ 0 & 0}$ and let $V_J$ be a basis matrix of $\ker(C_J)$. Then there exists a controller \eqref{ASD::eq::con} for \eqref{ASD::eq::sd_sye} such that the LMIs \eqref{ASD::lem::eq::stab} are feasible for the corresponding closed-loop system if and only if there exist continuously differentiable $\Xr = \smat{X_1 & \bullet \\ \bullet & \bullet}, \Yr = \smat{Y_1 & \bullet \\ \bullet & \bullet}$ satisfying
	\begin{subequations}
		\label{ASD::coro::eq::syn_elim_lmi}%
		\begin{equation}
		\mat{cc}{\Yr(\tau) & I \\ I & \Xr(\tau)} \cg 0,
		\label{ASD::coro::eq::syn_elim_lmia}%
		\end{equation}
		\begin{equation}
		\mat{c}{I \\ \h A}^T\mat{cc}{\dot \Xr(\tau) & \Xr(\tau) \\ \Xr(\tau) & 0}\mat{c}{I \\ \h A} \cl 0
		\label{ASD::coro::eq::syn_elim_lmib}%
		\end{equation}
		and
		\begin{equation}
		\mat{c}{ -\h A^T \\ I}^T\mat{cc}{0 & \Yr(\tau) \\ \Yr(\tau) & \dot \Yr(\tau)}\mat{c}{-\h A^T \\ I} \cg 0
		\label{ASD::coro::eq::syn_elim_lmic}%
		\end{equation}
		for all $\tau \in [0, T_{\max}]$ as well as
		\begin{equation}
		\mat{cc}{V_J & 0 \\ 0 & I}^T\left( \mat{cc}{X_1(0) & 0 \\ 0 & 0} - X(\tau)\right)\mat{cc}{V_J & 0 \\ 0 & I} \cl 0
		\label{ASD::coro::eq::syn_elim_lmid}%
		\end{equation}
		and
		\begin{equation}
		Y_1(0) - Y_1(\tau) \cg 0
		\label{ASD::coro::eq::syn_elim_lmie}%
		\end{equation}
	\end{subequations}
	for all $\tau \in [T_{\min}, T_{\max}]$.	
\end{coro}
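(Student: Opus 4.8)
The plan is to simply specialize Theorem~\ref{ASD::theo::syn_elim} to the reformulated impulsive system \eqref{ASD::eq::sd_sye}, which is manifestly of the form \eqref{ASD::eq::sys_syn}, and then to collapse the five resulting LMIs by exploiting the very special structure of its open-loop data. First I would match \eqref{ASD::eq::sd_sye} against the general description \eqref{ASD::eq::sys_syn}: with state $\smat{x \\ u}$ and fictitious inputs $\h u$, $u_J$, this reads off the flow matrices $\h A = \smat{A & B \\ 0 & 0}$, $\h B = 0$, $\h C = 0$ and the jump matrices $\h A_J = \smat{I & 0 \\ 0 & 0}$, $\h B_J = \smat{0 \\ I}$, $\h C_J = \smat{C_J & 0}$. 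Then Theorem~\ref{ASD::theo::syn_elim} applies verbatim, and it only remains to evaluate its LMIs \eqref{ASD::theo::eq::syn_elim_lmi} for this data.

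Second, I would determine the required basis matrices. Since $\h B = 0$ and $\h C = 0$, we have $\ker(\h B^T) = \ker(\h C) = \R^{n+m}$, so $U$ and $V$ may be taken to be identity matrices; as the projected cone $\{\Theta : M^T\Theta M \cl 0\}$ depends only on $\mathrm{im}(M)$, this choice is no loss of generality. Consequently \eqref{ASD::theo::eq::syn_elim_lmia}--\eqref{ASD::theo::eq::syn_elim_lmic} reduce immediately to \eqref{ASD::coro::eq::syn_elim_lmia}--\eqref{ASD::coro::eq::syn_elim_lmic} with $A$ replaced by $\h A$. For the jump data I would record $\ker(\h B_J^T) = \mathrm{im}\,\smat{I \\ 0}$ and $\ker(\h C_J) = \mathrm{im}\,\smat{V_J & 0 \\ 0 & I}$, hence I take $U_J = \smat{I \\ 0}$ as the $\h B_J^T$-annihilator and $\smat{V_J & 0 \\ 0 & I}$ as the $\h C_J$-annihilator.

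Third, I would simplify the two jump LMIs, where the whole point is the idempotent/projection structure of $\h A_J = \smat{I & 0 \\ 0 & 0}$. The key identities are $\h A_J \smat{V_J & 0 \\ 0 & I} = \smat{V_J & 0 \\ 0 & 0}$ and $\h A_J \smat{I \\ 0} = \smat{I \\ 0}$. Substituting the first into \eqref{ASD::theo::eq::syn_elim_lmid} makes the $\Xr(0)$-block collapse onto its leading corner, so that term equals $\smat{V_J & 0 \\ 0 & I}^T \smat{X_1(0) & 0 \\ 0 & 0}\smat{V_J & 0 \\ 0 & I}$; adding the surviving $-\Xr(\tau)$ piece yields precisely \eqref{ASD::coro::eq::syn_elim_lmid}. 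Likewise, substituting the second into \eqref{ASD::theo::eq::syn_elim_lmie} turns $\smat{-\h A_J^T \\ I}U_J$ into $\smat{-U_J \\ U_J}$, so the quadratic form reduces to $U_J^T(\Yr(0)-\Yr(\tau))U_J = Y_1(0) - Y_1(\tau)$, which is exactly \eqref{ASD::coro::eq::syn_elim_lmie}. Since all these manipulations are identities rather than mere implications, the equivalence furnished by Theorem~\ref{ASD::theo::syn_elim} is preserved, and the controller itself is recovered exactly as described after that theorem.

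I expect the only genuinely delicate step to be the bookkeeping of kernels and block sizes for this fictitious-input model: in particular, noticing that the vanishing of $\h B$ and $\h C$ erases the annihilators $U$ and $V$ from the flow conditions while the nontrivial $\h B_J$ and $\h C_J$ leave genuine projections in the jump conditions, and that the projection structure of $\h A_J$ is exactly what deletes the off-diagonal blocks and shrinks the $\Xr(0)$- and $\Yr(0)$-terms in \eqref{ASD::coro::eq::syn_elim_lmid} and \eqref{ASD::coro::eq::syn_elim_lmie}. Everything past that is routine block multiplication.
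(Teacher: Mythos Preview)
Your proposal is correct and follows exactly the route the paper indicates: the paper merely states that the corollary is ``a consequence of Theorem~\ref{ASD::theo::syn_elim}'' applied to the impulsive reformulation \eqref{ASD::eq::sd_sye}, without spelling out the block computations, and what you have written is precisely that specialization carried out in detail. Your identification of the open-loop data, the kernel bases, and the collapse of the jump LMIs via the projection structure of $\h A_J$ are all accurate and constitute the intended argument.
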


Due to the specific structure of \eqref{ASD::eq::sd_sye}, the resulting controller \eqref{ASD::eq::con} can also be expressed as a discrete-time linear time-varying controller of order $n + p$ if $B \in \R^{n \times p}$.

Existing output-feedback design approaches for sampled-data systems are typically based on lifting techniques or on their interpretation as a delay system as, e.g., in \cite{RamMoh14}.
To the best of our knowledge, it is nowhere addressed in the literature apart from \cite{GerCol19} how the representation \eqref{ASD::eq::sd_sys} as an impulsive system can be employed for systematic output-feedback design.
In contrast to \cite{GerCol19}, our underlying design results for impulsive systems are not specially tailored for an application to sample-data systems which makes them more flexible but no more conservative. This flexibility also manifests itself in Theorems \ref{ASD::theo::syn_trafo}, \ref{ASD::theo::syn_elim} and \ref{ASD::theo::syn_trafo2} offering three different design strategies.
Moreover, our conditions easily permit a seamless extension, e.g., to $H_\infty$-performance, to gain-scheduling controller synthesis or to the design of consensus protocols, in parallel to what has been suggested in \cite{HolSch19b}.

\section{Example}

Among the many possibilities to illustrate our results also on concrete applications, we choose one that nicely allows to compare impulsive LTV with impulsive LTI controller designs and to analyze the effect of the hold operation \eqref{ASD::eq::sd_sysb} in terms of conservatism.
To this end, let us consider the family of systems \eqref{ASD::eq::sd_sys} with $T_{\min} = 0.25$,
\begin{equation*}
A = \mat{cc}{0.5 & \al \\ -\al & 0.5},\quad
B = \mat{c}{0 \\ 1}
\teq{ and }
C_J = \mat{cc}{1 & 0}
\end{equation*}
for some parameter $\al$ in $[1, 5]$. With bisection we compute the largest $T_{\max}$, as a function of $\al$, for which we can find a stabilizing controller based on our results. For numerical tractability we search for polynomial matrix functions of degree $4$ and apply an SOS approach with multipliers of degree $2$; a perturbation of the right-hand sides of all inequalities by $-\eps I$ or $\eps I$ with $\eps = 0.1$ ensures strictness of the LMIs. The arising semidefinite programs are solved with \cite{Mos17} and YALMIP (\cite{Lof04}).

The curves resulting from an impulsive LTV (LTI) design for \eqref{ASD::eq::sd_sys} with and without \eqref{ASD::eq::sd_sysb}, respectively, are depicted in Fig.~\ref{ASD::fig::Tmax}.
This illustrates that \eqref{ASD::eq::sd_sysb}, as expected, can be restrictive for larger values of $T_{\max}$ and that there is indeed a cost for designing impulsive LTI controllers \eqref{ASD::eq::con2} instead of ones with clock-dependent system matrices \eqref{ASD::eq::con}.

\begin{figure}
	\begin{center}
		\includegraphics[width=0.49\textwidth, trim=25 208 30 20, clip]{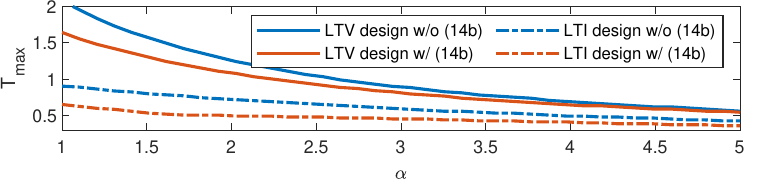}
	\end{center}
	\caption{Largest $T_{\max}$, as a function of $\al$, for which we find an impulsive LTV (LTI) stabilizing controller for \eqref{ASD::eq::sd_sys} with and without \eqref{ASD::eq::sd_sysb}, respectively.}
	\label{ASD::fig::Tmax}
\end{figure}

\section{Conclusion}

We propose a novel streamlined approach for designing stabilizing dynamic output-feedback controller for a class of aperiodic impulsive systems subject to a range dwell-time condition. Our synthesis criteria are based on an analysis result by \cite{Bri13} and formulated as clock-dependent LMIs. These can be solved numerically, e.g., by using matrix SOS relaxations.
We also demonstrate the design of controllers with clock-dependent as well as constant system matrices, and show how our findings can be employed for output-feedback synthesis for aperiodic sample-data systems.
Our findings are illustrated and compared with each other by means of a numerical example.

Future research could, for example, involve studies on output-feedback design in the case that the controller and the underlying system jump in an asynchronous fashion. 

\begin{ack}
We would like to thank Corentin Briat for pointing out several interesting references and a stimulating discussion.
\end{ack}

\bibliography{literatur}  

\appendix
\section{Technical Proofs}    

\begin{proof}[Proof of Theorem \ref{ASD::theo::syn_trafo}]
	We only prove sufficiency as necessity is essentially obtained by reversing the arguments. 
	Whenever we take an inverse of a matrix valued map in the sequel, this is meant pointwise, i.e., for a map $F$ the function $F^{-1}$ satisfies $F^{-1}(\tau)F(\tau) = I$ for all $\tau$ in its domain.
	
	{\bf Step 1: Construction of a Certificate $\Xg$:} Due to \eqref{ASD::theo::eq::syn_trafo_lmisa}, we can infer the existence of differentiable and pointwise nonsingular functions $U, V$ satisfying $UV^T = I - \Xr\Yr$; a possible choice is $U = \Xr$ and $V = \Xr^{-1} - \Yr$. We can then define $\Yc := \smat{\Yr & I \\ V^T & 0}$, $\Zc := \smat{I & 0 \\ \Xr & U}$ and $\Xg := \Yc^{-T} \Zc$.
	
	{\bf Step 2: Transformation of Parameters:} Let us define the controller matrices $\smat{\Ag & \Bg \\ \Cg & \Dg}$ and $\smat{\AJg & \BJg \\ \CJg & \DJg}$ as
	\begin{equation*}
	\mat{cc}{U & \Xr B \\ 0 & I}^{-1} \mat{cc}{\Kr - \Xr A\Yr - \dot \Xr \Yr - \dot U V^T & \Lr \\ \Mr & \Nr} \mat{cc}{V^T & 0 \\ C\Yr & I}^{-1}
	\end{equation*}
	and
	\begin{equation*}
	\arraycolsep=3pt
	\mat{cc}{U(0) & \Xr(0)B_J \\ 0 & I}^{\!-1}\! \mat{cc}{\KJr - \Xr(0)A_J \Yr & \LJr \\ \MJr & \NJr} \mat{cc}{V^T & 0 \\ C_J \Yr & I}^{\hspace{-0.5ex}-1},
	\end{equation*}
	respectively. These choices are motivated by the following observations. Note at first that
	\begin{equation*}
	\Yc^T \Xg \Yc = \Yc^T\Yc^{-T}\Zc \Yc = \Zc \Yc = \mat{cc}{\Yr & I \\ I & \Xr} = \Xb
	\end{equation*}
	and
	\begin{equation*}
	\arraycolsep=3pt
	\Yc^T \dot \Xg \Yc
	= \dot \Zc \Yc - \dot \Yc^T \Zc^T
	%
	%
	= \Zb + \mat{cc}{0 & (\bullet)^T \\ \dot \Xr \Yr + \dot U V^T & 0}
	\end{equation*}
	hold since $\Yc^T \Xg = \Zc$ and $\Yc^T \dot \Xg  + \dot \Yc^T \Xg = \dot \Zc$. Moreover, we infer by routine computations that $\Yc^T \Xg \Ac \Yc$ equals
	\begin{equation*}
	\arraycolsep=2pt
	\begin{aligned}
	&\phantom{=} \Zc
	\left[
	\mat{cc}{A & 0  \\ 0 & 0} +
	\mat{cc}{0 & B \\ I & 0}
	\mat{cc}{\Ag & \Bg \\ \Cg & \Dg}
	\mat{cc}{0 & I \\ C & 0}\right]
	\Yc  \\
	%
	%
	&=\hspace{-0.5ex}\mat{cc}{A \Yr & A \\ 0 & \Xr A}\hspace{-0.5ex}\!+\!\hspace{-0.5ex}
	\mat{cc}{0 & B \\ I & 0} \hspace{-1ex}
	\left[\hspace{-0.5ex}\mat{cc}{\Kr & \Lr \\ \Mr & \Nr}\hspace{-0.5ex}-\hspace{-0.5ex}\mat{cc}{\dot \Xr \Yr\!+\!\dot U V^T & 0 \\ 0 & 0}\hspace{-0.5ex} \right]\hspace{-1ex}
	\mat{cc}{I & 0  \\ 0 & C} \\
	&= \Ab - \mat{cc}{0 & 0 \\ \dot \Xr \Yr + \dot U V^T & 0}.
	\end{aligned}
	\end{equation*}
	By combining the last two identities we obtain
	\begin{equation*}
	\Yc^T \big(\dot \Xg + \Ac^T\Xg + \Xg \Ac\big)\Yc
	= \Zb + \Ab^T + \Ab.
	\end{equation*}
	Finally, we compute in a similar fashion
	\begin{equation*}
	\Yc(0)^T \Xg(0)\Ac_J(\tau)\Yc(\tau)
	= \Ab_J(\tau)
	\end{equation*}
	for all $\tau \in [T_{\min}, T_{\max}]$.
	
	{\bf Step 3: Transformation of LMIs:} Due to the identities from the previous step, the LMIs \eqref{ASD::theo::eq::syn_trafo_lmisa} and \eqref{ASD::theo::eq::syn_trafo_lmisb} read, after a congruence transformation with $\Yc^{-1}$, as
	\begin{equation*}
	\Xg \cg 0
	\text{ ~and~ }\dot \Xg + \Ac^T \Xg + \Xg \Ac \cl 0
	\teq{ on } [0, T_{\max}].
	\end{equation*}
	Similarly, a congruence transformation with the matrix $\diag(\Yc(\tau), \Yc(0))^{-1}$ leads from \eqref{ASD::theo::eq::syn_trafo_lmisc} to
	\begin{equation*}
	\mat{cc}{\Xg(\tau) & \Ac_J(\tau)^T\Xg(0) \\
		\Xg(0)\Ac_J(\tau) & \Xg(0)} \cg 0
	\end{equation*}
	and, by an application of the Schur complement, to
	\begin{equation*}
	\Xg(\tau) - \Ac_J(\tau)^T\Xg(0)\Ac_J(\tau) \cg 0
	\text{ ~for all~ }\tau \in [T_{\min}, T_{\max}].
	\end{equation*}
	This finishes the proof.
\end{proof}

\end{document}